\newtheorem{thm}{Theorem}[section]
\newtheorem{cor}[thm]{Corollary}
\newtheorem{lem}[thm]{Lemma}
\newtheorem{prop}[thm]{Proposition}
\theoremstyle{definition}
\newtheorem{defn}[thm]{Definition}
\theoremstyle{remark}
\newtheorem{ex}[thm]{Example}
\newtheorem{rem}[thm]{Remark}
\title[LCS blow-ups]{Locally conformal symplectic blow-ups}%
\author[S. Yang]{Song Yang}
\address{Center for Applied Mathematics, Tianjin University, Tianjin 300072, P. R. China}
\email{syang.math@gmail.com, syangmath@tju.edu.cn}
\thanks{This work is partially supported by NSFC (grant No. 11571242) and the second author is also partially supported by FRFCU (project No. 0903005203329 and No.
106112016CDJXY100005).}
\author[X. Yang]{Xiangdong Yang}
\address{Department of Mathematics, Chongqing University, Chongqing 401331, P. R. China}
\email{xiangdongyang2009@gmail.com, math.yang@cqu.edu.cn}
\author[G. Zhao]{Guosong Zhao}
\address{Department of Mathematics, Sichuan University, Chengdu 610064, P. R. China}
\email{gszhao@scu.edu.cn}
\numberwithin{equation}{section}
\begin{document}
\renewcommand{\labelenumi}{\textit{\alph{enumi})}}

\begin{abstract}
In this paper,  we study the blow-up of a locally conformal symplectic manifold.
We show that there exists a locally conformal symplectic structure
on the blow-up of a locally conformal symplectic manifold
along a compact induced symplectic submanifold.
\end{abstract}

\date{\today}

\keywords{Locally conformal symplectic manifolds, Locally conformal symplectic blow-ups}

\subjclass[2010]{55B35, 55C55}%

\maketitle



\section{Introduction}\label{Intro}

Let $M$ be a smooth manifold.
A symplectic form on $M$ is a $2$-form $\omega \in \Omega^{2}(M)$ satisfying: (1) $d\omega=0$ and (2) $\omega$ is non-degenerate, i.e.
for each $p\in M$ the map
$$
T_{p}M\ni v\longmapsto \omega(v,-)\in T_{p}^{\ast}M
$$
is an isomorphism.
It is of importance to point out that the existence of the symplectic form $\omega$ on $M$ determines pieces of topological data:
the de Rham cohomology of $M$ with even degrees are non-vanishing and the dimension of $M$ is even, denoted by $2n$,
and there exists a homotopy class of reductions of the structural group of the tangent bundle $TM$ to $\textmd{U}(n)\simeq\textmd{Sp}(2n;\mathbb{R})$.
In particular, if $M$ is a complex manifold and $\omega$ is the K\"{a}hler form of a Hermitian metric on $M$ then we say that $(M,\omega)$ is a K\"{a}hler manifold.

In a more general setting, a subclass of almost symplectic manifolds called locally conformal symplectic manifolds (LCS for short) was introduced and studied by Lee \cite{Lee43}, Liebermann\cite{PL54} and Vaisman \cite{IV76,IV85}.
Intuitively, a locally conformal symplectic form is a non-degenerate 2-form $\omega$ which is conformally equivalent to a symplectic form locally.
From a conformal point of view, locally conformal symplectic manifolds can be thought of the closest to symplectic manifolds.
In particular, the locally conformal symplectic manifolds can serve as natural phase spaces of Hamiltonian dynamical systems and from the geometric aspect it appears in the study of contact manifolds and Jacobi manifolds (cf. \cite{BK11,GL84,IV85}).
Likewise, if $M$ is a complex manifold and the locally conformal symplectic form $\omega$ on $M$ is the K\"{a}hler form of a Hermitian metric $h$
then we say that $(M,\omega)$ is a locally conformal K\"{a}hler manifold (LCK for short)(cf. \cite{DO98}).
To make this more precisely, we have the following diagram explaining the relationships between symplectic/K\"{a}hler manifolds and locally conformal symplectic/K\"{a}hler manifolds:
$$
\begin{array}[c]{ccccc}
&\{\textmd{K\"{a}hler\; manifolds}\} &\subset &\{\textmd{LCK\; manifolds}\} \\
&\rotatebox{90}{$\supset$}&&\rotatebox{90}{$\supset$} \\
&\{\textmd{Symplectic\; manifolds}\}  &\subset &\{\textmd{LCS\; manifolds}\} .
\end{array}
$$

It is well known that the blow-up is a very useful operation in symplectic/K\"{a}hler geometry.
In particular, the K\"{a}hler property is preserved under blow-ups.
In the symplectic category, it was McDuff \cite{McDuff84} who first proved that the blow-up of a symplectic manifold along a compact symplectic submanifold also admits a symplectic structure,
moreover, using this symplectic blow-up technique she constructed the first simply-connected, symplectic manifold which is non-K\"{a}hler.
For locally conformal K\"{a}hler manifolds, Tricerri \cite{FT82} and Vuletescu \cite{VV09} proved that the blow-up of a locally conformal K\"{a}hler manifold at a point has a locally conformal K\"{a}hler structure.
In 2013, using the current theory on locally conformal K\"{a}hler manifolds, Ornea-Verbitsky-Vuletescu \cite{OVV13} showed that the blow-up of a locally conformal K\"{a}hler manifold along a submanifold is locally conformal K\"{a}hler if and only if the submanifold is globally conformally equivalent to a K\"{a}hler submanifold.
In the locally conformal symplectic case, Y. Chen and the first named author \cite{CY16} introduced the definition of locally conformal symplectic blow-up of points and proved that the locally conformal symplectic blow-ups of points also admit locally conformally symplectic structures.
Therefore, a natural problem is:
\emph{
What is the locally conformal symplectic blow-up along a submanifold?
}

The purpose of this paper is to study some birational properties of locally conformal symplectic manifolds.
Inspired by the work of McDuff \cite{McDuff84} we give the construction of the locally conformal symplectic blow-up.
In addition, using the same methods of McDuff \cite{McDuff84} and Ornea-Verbitsky-Vuletescu \cite{OVV13}
we prove the following result
\begin{thm}\label{blow-up-thm}
Let $(M,\omega,\theta)$ be a locally conformal symplectic manifold and $Z$ be a compact induced globally conformal symplectic submanifold of $M$,
and let $\pi: \tilde{M}\to M$ be the blow-up of $M$ along $Z$.
Then $\tilde{M}$ also admits a locally conformal symplectic structure $(\tilde{\omega},\tilde{\theta})$ where $\tilde{\theta}=\pi^{*}\theta$.
\end{thm}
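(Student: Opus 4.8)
The plan is to reduce the problem to McDuff's symplectic blow-up \cite{McDuff84} by a conformal change of the LCS form, exploiting the hypothesis that $Z$ is globally conformal symplectic. Recall that an LCS structure is a pair $(\omega,\theta)$ with $\omega$ non-degenerate, $\theta$ closed, and $d\omega=\theta\wedge\omega$, and that under a conformal change $\omega\mapsto e^{-f}\omega$ the Lee form transforms as $\theta\mapsto\theta-df$. Since $Z$ is compact, a tubular neighborhood $U$ of $Z$ deformation retracts onto $Z$, so $H^1(U;\mathbb{R})\cong H^1(Z;\mathbb{R})$; the assumption that the structure induced on $Z$ is globally conformal symplectic means $[i^*\theta]=0$, hence $[\theta|_U]=0$, and we may write $\theta|_U=df$ for some $f\in C^\infty(U)$.

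First I would set $\omega_0:=e^{-f}\omega$ on $U$. Then $d\omega_0=(\theta-df)\wedge\omega_0=0$, so $\omega_0$ is a genuine symplectic form on $U$; moreover $\omega_0|_Z=e^{-f}\,\omega|_Z$ is non-degenerate, because $Z$ is an induced symplectic submanifold and $e^{-f}>0$. Thus $(U,\omega_0)$ is symplectic with $Z$ a compact symplectic submanifold, and its symplectic normal bundle $TZ^{\omega_0}$ carries a compatible Hermitian structure. I would then apply McDuff's construction to blow up $(U,\omega_0)$ along $Z$, producing a symplectic form $\tilde\omega_0$ on the blow-up $\tilde U=\pi^{-1}(U)$ that can be arranged to coincide with $\pi^{*}\omega_0$ outside an arbitrarily small neighborhood of the exceptional divisor $E$.

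The LCS form is recovered by rescaling back. On $\tilde U$ define $\tilde\omega:=e^{\pi^{*}f}\tilde\omega_0$. Since $\tilde\omega_0$ is closed, a direct computation gives $d\tilde\omega=\pi^{*}(df)\wedge\tilde\omega=\pi^{*}\theta\wedge\tilde\omega$, so $\tilde\omega$ is LCS on $\tilde U$ with Lee form $\pi^{*}\theta$, and it is non-degenerate because $e^{\pi^{*}f}>0$. On the region where McDuff's form equals the pullback one computes $\tilde\omega=e^{\pi^{*}f}\pi^{*}\omega_0=e^{\pi^{*}f}\pi^{*}(e^{-f}\omega)=\pi^{*}\omega$, which matches the form $\pi^{*}\omega$ used on $\tilde M\setminus E$; the latter is itself LCS with Lee form $\pi^{*}\theta$, since $d(\pi^{*}\omega)=\pi^{*}(\theta\wedge\omega)=\pi^{*}\theta\wedge\pi^{*}\omega$ and $\pi$ restricts to a diffeomorphism $\tilde M\setminus E\to M\setminus Z$. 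Hence the two definitions agree on the overlap and glue to a global non-degenerate $2$-form $\tilde\omega$ on $\tilde M$ with $d\tilde\omega=\pi^{*}\theta\wedge\tilde\omega$; as $\pi^{*}\theta$ is closed, $(\tilde\omega,\pi^{*}\theta)$ is the desired LCS structure.

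The main obstacle is not the gluing, which is forced once McDuff's symplectic form is arranged to equal $\pi^{*}\omega_0$ away from $E$, but rather securing the globally defined conformal potential $f$ on a full neighborhood of $Z$. This is precisely where the globally conformal symplectic hypothesis enters, via the isomorphism $H^1(U)\cong H^1(Z)$ that upgrades $[i^{*}\theta]=0$ to exactness of $\theta|_U$; without it, $\theta|_U$ would represent a nonzero class and the conformal reduction to the symplectic category would break down. The remaining care is to recall McDuff's local model (the symplectic neighborhood theorem together with the Hermitian structure on $TZ^{\omega_0}$, which requires $Z$ to have even codimension) and to verify that the smooth surgery defining $\tilde M$ is compatible with these choices.
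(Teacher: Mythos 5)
Your proposal is correct and follows essentially the same route as the paper: use the IGCS hypothesis together with the isomorphism $H^{1}_{dR}(U)\cong H^{1}_{dR}(Z)$ for a tubular neighborhood $U$ of $Z$ to write $\theta|_{U}=df$, conformally rescale to a genuine symplectic form on $U$, apply McDuff's blow-up result (Proposition \ref{McDuff-prop}) there, and glue with $\pi^{*}\omega$ away from the exceptional divisor. The only (minor) refinement is that you keep the conformal change local to $U$ and explicitly rescale back by $e^{\pi^{*}f}$ on $\tilde{U}$, which avoids extending $f$ to all of $M$ and produces the Lee form $\pi^{*}\theta$ on the nose, whereas the paper performs the conformal change of $\omega$ globally and leaves that bookkeeping implicit.
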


This paper is organized as follows.
We devote Section 2 to the preliminary of locally conformal symplectic structures.
In Section 3, we give the construction of locally conformal symplectic blow-up.
This construction is based on the fact that the tangent bundle of a locally conformal symplectic manifold is a symplectic vector bundle.
In Section 4, we give the proof of the main result (Theorem \ref{blow-up-thm}).
Finally, we propose two further problems related to the locally conformal symplectic blow-up.
\section{Locally conformal symplectic manifolds}\label{lcs manifolds}
In this section we give a rapid review of locally conformal symplectic manifolds.
Assume that $M$ is a smooth manifold of dimension $n\geq 4$.
Intuitively, a \textit{locally conformal symplectic structure} on $M$
is a non-degenerate $2$-form $\omega$ which is locally conformal to a symplectic form.
More precisely, if there exists an open covering $\{U_{\alpha}\}$ of $M$ and
a family of smooth real-valued functions $\{f_{\alpha}: U_{\alpha}\to \mathbb{R}\}$ such that
$\exp{(-f_{\alpha})}(\omega|_{U_{\alpha}})$ is a symplectic form on $U_{\alpha}$, i.e.,
$
d(\exp{(-f_{\alpha})}\omega|_{U_{\alpha}})=0,
$
then we say that $\omega$ is a locally conformal symplectic structure on $M$.

Let
$
\omega_{\alpha}:=\exp{(-f_{\alpha})}(\omega\mid_{U_{\alpha}}),
$
then from definition we have
\begin{eqnarray*}
  0 =d\omega_{\alpha}
    &=&d(\exp{(-f_{\alpha})}\omega)\\
    &=& -\exp{(-f_{\alpha})}(df_{\alpha}\wedge\omega-d\omega)\\
    &=& \exp{(-f_{\alpha})}(d\omega-df_{\alpha}\wedge\omega).
\end{eqnarray*}
on $U_\alpha$.
This implies that
\begin{equation}\label{equ2.1}
d\omega=df_{\alpha}\wedge\omega
\end{equation}
on $U_{\alpha}$.
Likewise, consider the form $\omega_{\beta}:=\exp{(-f_{\beta})}(\omega\mid_{U_{\beta}})$ we get
\begin{equation}\label{equ2.2}
d\omega=df_{\beta}\wedge\omega
\end{equation}
on $U_{\beta}$.
Suppose that $U_{\alpha}\cap U_{\beta}\neq\emptyset$ then from (\ref{equ2.1}) and (\ref{equ2.2}) we obtain
\begin{equation}
(df_{\alpha}-df_{\beta})\wedge\omega=0
\end{equation}
on $U_{\alpha}\cap U_{\beta}$.
Note that $\omega$ is non-degenerate and the wedge product with $\omega$ is injective on $1$-forms,
hence we obtain a globally defined closed $1$-form $\theta:=\{df_{\alpha}, U_{\alpha}\}$ on $M$ which satisfies
\begin{equation}
d\omega=\theta\wedge\omega.
\end{equation}
Equivalently, we have
\begin{defn}[Locally conformal symplectic structure]
Let $M$ be a smooth manifold of dimension $n\geq 4$.
We say that a non-degenerate $2$-form $\omega$ is a \textit{locally conformal symplectic structure},
if there exists a closed $1$-form $\theta$ such that
\begin{equation}\label{LCS defn}
d\omega=\theta\wedge\omega.
\end{equation}
The triple $(M,\omega, \theta)$ is called a locally conformal symplectic manifold.
\end{defn}

Suppose that there exists another $\theta'$ satisfying \eqref{LCS defn},
then $(\theta-\theta')\wedge \omega=0$.
From the Cartan lemma we get $\omega=(\theta-\theta')\wedge \beta$ for some $1$-form $\beta$;
however, this leads to a contradiction with the non-degeneracy of $\omega$.
This implies that $\theta$ is uniquely determined by $\omega$
and we call it the \textit{Lee form} of the LCS manifold.
In particular, if $\theta$ is an exact $1$-form, i.e. $\theta=df$ for some smooth function $f$ on $M$
then $\omega$ is called \textit{globally conformal symplectic} (\textit{GCS} for short)
and it is straightforward to verify that $e^{-f}\omega$ is a symplectic form on $M$.
\begin{ex}
Every LCK manifold is a LCS manifold.
In particular, many well-known non-K\"{a}hler manifolds, such as the Hopf manifolds and the Inoue surfaces and so on,
are LCK manifolds (cf. \cite[Chapter 3]{DO98}).
\end{ex}
\begin{ex}
Let $N$ be a smooth manifold.
Then the cotangent bundle $T^{\ast}N$ is an open symplectic manifold with the symplectic form $d \lambda$,
where $\lambda$ is the canonical $1$-form on $T^{\ast}N$.
If $\theta'$ is a closed $1$-form on $N$,
then $\omega:=d\lambda-\pi^{\ast}\theta'\wedge\lambda$ is a LCS form on $T^{\ast}N$ with the Lee form $\theta=\pi^{\ast}\theta'$,
where $\pi:T^{\ast}N\to N$ is the bundle map.
Moreover, if $\theta'$ is an exact $1$-form
then $\omega=$ is a GCS form.
\end{ex}

\begin{ex}(\cite[Section 5]{BK11})
Let $X$ be a compact contact manifold and let $\phi:X\longrightarrow X$ be a strict contactomorphism,
then there exists a LCS structure on the mapping torus of $X$ with respect to $\phi$.
In particular, we can choose a 3-dimensional contact manifold $X$ such that $X\times S^{1}$ admits no symplectic and complex structures.
This gives rise to an example that is LCS and not LCK.
\end{ex}

Let $\Omega^{*}(M)$ be the space of smooth forms on the LCS manifold $(M,\omega, \theta)$.
We can define the Lichnerowicz differential
\footnote{In the case of LCK manifolds the differential is called the $\theta$-twisted differential and the associated complex (cohomology) is called the Morse-Novikov complex (cohomology).}
by
\begin{eqnarray*}
d_{\theta}:\Omega^{*}(M)&\rightarrow&\Omega^{*+1}(M)\\
\alpha&\mapsto&d\alpha-\theta\wedge\alpha.
\end{eqnarray*}
Furthermore, we have a complex
\begin{equation}\label{5}
\xymatrix@C=0.5cm{
  \cdot\cdot\cdot \ar[r]^{d_{\theta}\,\,\,\,} & \Omega^{k-1}(M) \ar[r]^{\,\,d_{\theta}} & \Omega^{k}(M)\ar[r]^{\,\,d_{\theta}} &  \cdot\cdot\cdot }
\end{equation}
The complex $(\Omega^{*}(M),d_{\theta})$ is called the \emph{Lichnerowicz complex}, and the associated cohomology group
$$
H^{*}_{\theta}(M):=H^{*}(\Omega^{*}(M);d_{\theta})
$$
is called the \emph{Lichnerowicz cohomology}.
This cohomology is a conformal invariant of the locally conformal symplectic manifold,
which is a proper tool in the study of locally conformal symplectic geometry.


\section{Construction of locally conformal symplectic blow-ups}

In this section, inspired by McDuff's construction of symplectic blow-ups,
we give the construction of blow-up of LCS manifolds along its induced locally conformal symplectic submanifolds
and for more details we refer to McDuff \cite[Section 2 and Section 3]{McDuff84}.

Let $(M,\omega,\theta)$ be a LCS manifold of dimension $2n$.
Then for any $p\in M$ the tangent space $T_{p}M$ is a symplectic vector space with the symplectic bilinear form
$$
\omega_{p}:T_{p}M\times T_{p}M\longrightarrow \mathbb{R}.
$$
This implies that the structural group of the tangent bundle of $M$ is $\textmd{Sp}(2n;\mathbb{R})$;
furthermore, if we fix an orientation on $M$ then the structural group $\textmd{Sp}(2n;\mathbb{R})$ can be reduced to $U(n)$.

\begin{defn}[Induced LCS submanifold]
Let $(M,\omega,\theta)$ be a LCS manifold, and let $i:Z\hookrightarrow M$ be a submsnifold.
We say that $Z$ is an induced locally conformal symplectic submanifold (\textit{ILCS} submanifold for short)
if $i^{\ast}\omega$ is non-degenerate.
\end{defn}
\begin{defn}[Induced GCS submanifold]
We say that $Z$ is an induced globally conformal symplectic submanifold (\textit{IGCS} submanifold for short)
if $Z$ is an ILCS submanifold and the cohomology class $i^{\ast}[\theta]$ vanishes.
\end{defn}

Notice that an IGCS submanifold of a LCS manifold is always a symplectic submanifold.
Now let $(M,\omega,\theta)$ be a LCS manifold, and let $i:Z\hookrightarrow M$ be an ILCS submsnifold
then we have the following lemma.

\begin{lem}
Let $(M,\omega,\theta)$ be a LCS manifold, and let $Z\subset M$ be an ILCS submsnifold.
Then the normal bundle $\mathcal{N}:=\mathcal{N}_{Z/M}$ of $Z$ in $M$ admits a complex vector bundle structure.
\end{lem}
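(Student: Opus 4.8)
The plan is to reduce the assertion to the standard fact that every symplectic vector bundle carries a compatible complex structure, applied to the normal bundle after realizing it as a symplectic orthogonal complement. Note first that neither the Lee form $\theta$ nor the identity $d\omega=\theta\wedge\omega$ enters here; only the pointwise non-degeneracy of $\omega$ matters. As recalled above, $(TM,\omega)$ is a symplectic vector bundle, since each $\omega_p$ is a non-degenerate alternating form on $T_pM$. Restricting to $Z$ and using that $i^{\ast}\omega$ is non-degenerate, I would observe that $TZ\subset TM|_Z$ is a symplectic subbundle, i.e. $\omega_p|_{T_pZ}$ is non-degenerate for every $p\in Z$.

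Next, for each $p\in Z$ I would form the $\omega_p$-orthogonal complement
$$
(T_pZ)^{\perp_\omega} := \{\, v\in T_pM : \omega_p(v,w)=0 \text{ for all } w\in T_pZ \,\}.
$$
Because $\omega_p|_{T_pZ}$ is non-degenerate, elementary symplectic linear algebra gives the splitting $T_pM=T_pZ\oplus (T_pZ)^{\perp_\omega}$, with $(T_pZ)^{\perp_\omega}$ again a symplectic subspace. These complements have constant rank and vary smoothly with $p$, so they assemble into a smooth symplectic subbundle $(TZ)^{\perp_\omega}\subset TM|_Z$, and composing its inclusion with the quotient projection $TM|_Z\to\mathcal{N}$ produces a bundle isomorphism $(TZ)^{\perp_\omega}\cong\mathcal{N}$. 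Hence $\mathcal{N}$ inherits the structure of a symplectic vector bundle.

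Finally, I would invoke the compatibility construction. Choosing any Riemannian metric $g$ on $\mathcal{N}$, the fiberwise endomorphism $A$ determined by $\omega(u,v)=g(Au,v)$ is $g$-skew and invertible, so $A^{\ast}A$ is positive-definite and the polar decomposition $A=J|A|$, with $|A|=(A^{\ast}A)^{1/2}$, yields a smooth endomorphism $J$ satisfying $J^2=-\mathrm{id}$ and compatible with the symplectic form. This promotes $\mathcal{N}$ to a complex vector bundle, which is precisely the claim; equivalently, it is the fiberwise reduction of the structural group from $\mathrm{Sp}$ to $U$ recalled at the start of this section.

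The one point needing care, and the only real (though routine) obstacle, is smoothness: one must check that $(TZ)^{\perp_\omega}$ is genuinely a smooth subbundle, which follows from constant rank together with local frames produced by $\omega$-orthogonalization, and that the polar factor $J$ depends smoothly on $p$, which holds because $(A^{\ast}A)^{1/2}$ is a smooth function of the positive-definite operator $A^{\ast}A$. The conceptual reason no global choices can obstruct the construction is that the space of compatible complex structures on a fixed symplectic vector space is contractible, so the fiberwise choices patch together.
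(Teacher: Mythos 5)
Your proof is correct and follows essentially the same route as the paper: realize $(TM,\omega)$ as a symplectic vector bundle, identify the normal bundle $\mathcal{N}$ with the symplectic orthogonal complement $(TZ)^{\perp_\omega}$ of the symplectic subbundle $TZ\subset TM|_Z$, and then equip this symplectic bundle with a compatible complex structure. The only difference is that you spell out the polar-decomposition construction of the compatible $J$ and the smoothness checks, which the paper simply invokes as the standard fact that every symplectic vector bundle admits a compatible complex structure.
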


\begin{proof}
Note that the locally conformal symplectic form $\omega$ on $M$ yields a smooth section of the vector bundle $T^{\ast}M\wedge T^{\ast}M$.
The non-degeneration of $\omega$ means that $(TM,\omega)$ is a symplectic vector bundle.
Since $Z$ is an ILCS submsnifold of $M$ the tangent subbuncle $(TZ,\omega|_{Z})$ is a symplectic subbundle of $(TM|_{Z},\omega|_{Z})$.
Define the symplectic complement of $TZ$ in $(TM|_{Z},\omega|_{Z})$ to be the space
$$
{TZ}^{\omega}:=\bigcup_{p\in Z}\{v\in T_{p}M \mid \omega_{p}(v,w)=0, \, \textmd{for}\,\,\textmd{all} \,\,w\in T_{p}Z\}.
$$
On the one hand, we observe that ${TZ}^{\omega}$ is a symplectic vector bundle with symplectic bilinear form $\omega\mid_{Z}$
which can be identified with the normal bundle $\mathcal{N}$.
On the other hand, since we can choose a compatible complex structure on each symplectic vector bundle to make it into a complex vector bundle.
This immediately implies that the normal bundle $\mathcal{N}$ admits a complex vector bundle structure.
\end{proof}

We are now in a position to give the construction of LCS blow-up.
This construction is analogous to the case of symplectic blow-up since the normal bundle $\mathcal{N}$ is a complex vector bundle.
In the rest of this section we follow the lines in \cite{McDuff84} and use the same results and intermediate steps to construct the LCS blow-up.

Let
$
p:\mathbb{P}(\mathcal{N})\to Z,
$
be the projective bundle corresponding to the normal bundle $\mathcal{N}\longrightarrow Z$.
The tautological line bundle over $\mathbb{P}(\mathcal{N})$, denoted by $L$, is defined to be the subbundle of
$\mathbb{P}(\mathcal{N})\times \mathcal{N}$
whose fiber is $\{(l,v)\,|\,v\in l\}$, i.e.,
$$
L:=\{ (l,v) \,|\, (l,v\in l)\in \mathbb{P}(\mathcal{N})\times \mathcal{N} \}.
$$
Then we have the following commutative diagram:
$$
\CD
  L_{0} @> >> L @> q >> \mathbb{P}(\mathcal{N})\\
  @V \pi VV @V \pi VV @V p VV  \\
  \mathcal{N}_{0} @> >> \mathcal{N} @> \varphi >> Z
\endCD
$$
where $q$ and $\pi$ are the projections of $L$ over $\mathbb{P}(\mathcal{N})$ and $\mathcal{N}$ respectively,
and $L_0$ is the complement of the zero section in $L$
and  $\mathcal{N}_{0}$ is the complement of the zero section in $\mathcal{N}$ .

To define the blow-up as a smooth manifold,
we need following notations:
\begin{itemize}
     \item $W$ a closed tubular neighborhood of $Z$ in $M$,
     \item $D$ a subdisc bundle of $\mathcal{N}$ diffeomorphic to $W$,
     \item $\tilde{D}:=\pi^{-1}(D)$ a disc bundle of the complex line bundle $L$.
\end{itemize}
Following McDuff \cite{McDuff84} we have:
\begin{defn}[LCS blow-up]
Let $(M,\omega,\theta)$ be a LCS manifold, and let $Z\subset M$ be an ILCS submsnifold.
The blow-up $\tilde{M}$ of $M$ along $Z$ is the manifold
$$
\tilde{M}:=\overline{M-W}\bigcup_{\partial \tilde{D}} \tilde{D},
$$
where $\partial \tilde{D}$ is identified with $\partial W$ via the diffeomorphism from $D$ to $W$.
\end{defn}

In particular, the map $\pi$ gives rise to an identification of $\tilde{D}-\mathbb{P}(\mathcal{N})$ with $D-Z$,
and thus an identification of $\tilde{M}-\mathbb{P}(\mathcal{N})$ with $M-Z$.
Therefore, on topology we may view
$$
\tilde{M}:=(M-Z)\bigcup \tilde{D}
$$
by equalizing $M-Z$ and $\tilde{D}$
along $W-Z\cong D-Z \cong \tilde{D}-\mathbb{P}(\mathcal{N})$.
There is a natural inclusion $\mathbb{P}(\mathcal{N}) \hookrightarrow \tilde{M}$,
and we call the projective bundle $\mathbb{P}(\mathcal{N})$ the \textit{exceptional divisor} of the blow-up $\pi: \tilde{M}\to M$ along $Z$.

\begin{rem}\label{rem3.5}
Note that the construction of LCS blow-up depends on the complex vector bundle structure of the normal bundle $\mathcal{N}$ and the tubular neighbourhoods.
Therefore, this construction is not canonical;
however, we can choose the compact tubular neighborhood $W$ of $Z$ in $M$ sufficiently small.
\end{rem}

\section{Proof of the main result}

In this section we give the proof of Theorem \ref{blow-up-thm}.
We use the same method as \cite[Section 3]{McDuff84} and for the reader's convenience we first recall this argument.

Let $(U,\omega)$ be a symplectic manifold and let $i:Z\hookrightarrow U$ be a compact symplectic submanifold of codimension $2k$.
Consider the normal bundle $\pi:\mathcal{N}\longrightarrow Z$ of $Z$ in $U$.
Since $\mathcal{N}$ has a complex vector bundle structure the fiber $\mathcal{N}_{x}$ for each $x\in Z$ admits a canonical exact symplectic form.
From another aspect, in the horizonal direction the zero section of $\mathcal{N}$, still write as $Z$, is a symplectic manifold with the symplectic
form $\omega_{Z}:=i^{*}\omega$.
Choose a local trivialization of $\mathcal{N}$,
i.e. an open covering $\{U_{i}\}$ of $Z$ such that $\mathcal{N}\mid_{U_{i}}\cong U_{i}\times \mathbb{C}^{k}$.
For each $i$ there exists a 1-form $\alpha_{i}$ on $\mathcal{N}\mid_{U_{i}}$ satisfying:
\begin{itemize}
  \item [(1)] for any $x\in U_{i}$ the restriction of $d\alpha_{i}$ on the fiber $\mathcal{N}_{x}$ is the canonical symplectic form;
  \item [(2)] $\alpha_{i}$ is zero on $U_{i}$.
\end{itemize}
Let $\{f_{i}\}$ be a partition of unity subordinate to the open covering $\{U_{i}\}$ then we may construct a closed 2-form on $\mathcal{N}$,
denoted by
$$
\rho=\pi^{*}\omega+\sum_{i}d(f_{i}\alpha_{i}).
$$
In particular, $\rho$ restrict to the canonical symplectic form on each fiber and to $\omega_{Z}$ on $Z$.

According to \cite[Lemma 3.2]{McDuff84}, there exists a closed $2$-form $\alpha$ on $\mathbb{P}(\mathcal{N})$
such that $\alpha$ restricts to the K\"{a}hler form of the canonical Fubini-Study metric on each fibre of
$p:\mathbb{P}(\mathcal{N})\longrightarrow Z$
and the pull-back of $\alpha$ under $q^{\ast}$ is an exact form on $L_{0}$.
Since $q^{\ast}\alpha$ is exact on  $L_{0}$ we have $q^{\ast}\alpha=d \beta$ for some $1$-form $\beta$ on $L_{0}$.
Let $\tilde{U}:=\overline{U-W}\bigcup_{\partial \tilde{D}} \tilde{D}$ be the symplectic blow-up of $U$ along $Z$.
We can choose a constant $\varepsilon=\varepsilon(\rho,\alpha)>0$
and a smooth function $b$ on $\tilde{D}$ which equals 1 near $\mathbb{P}(\mathcal{N})$ and 0 $\partial \tilde{D}$.
Define a closed 2-form $\tilde{\rho}$ on $\tilde{D}$ by setting
$$
\tilde{\rho}
:=\left\{
    \begin{array}{ll}
       \pi^{\ast}\rho+\varepsilon q^{\ast} \alpha  & \textmd{on}\;  \mathbb{P}(\mathcal{N}),\\
      \pi^{\ast}\rho+\varepsilon d(b \beta)  & \textmd{on}\; \tilde{D}- \mathbb{P}(\mathcal{N}).
    \end{array}
  \right.
$$
We may choose suitable $\varepsilon$ such that $\tilde{\rho}$ is non-degenerated on $\tilde{V}:=\pi^{-1}(V)$, where $V$ is a neighborhood of $Z$.
Hence the $2$-form
$$
\tilde{\omega}
:=\left\{
    \begin{array}{ll}
      \omega & \textmd{on}\; U-W \\
       \tilde{\rho} & \textmd{on}\; \tilde{D}
    \end{array}
  \right.
$$
is non-degenerated and closed, i.e. it is a symplectic form on $\tilde{U}$.
More precisely, we have the following key result in the symplectic blow-up.
\begin{prop}(\cite[Proposition 3.7]{McDuff84})\label{McDuff-prop}
Suppose $(U,\omega)$ be a symplectic manifold,
and $i:Z\hookrightarrow U$ be a compact symplectic submanifold (i.e., $i^{\ast}\omega$ is a symplectic form).
Let $\pi:\tilde{U}\to U$ be the blow-up of $U$ along $Z$.
Then there exists a symplectic form $\tilde{\omega}$ on $\tilde{U}$
such that
$$
\tilde{\omega}|_{\tilde{U}-\pi^{-1}(V)}=\pi^{\ast}\omega,
$$
for some neighborhood $V$ of $Z$.
\end{prop}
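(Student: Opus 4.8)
The plan is to carry out the tubular-neighbourhood construction assembled in the paragraph preceding the statement and then verify that the glued form is genuinely symplectic. First I would localize the problem. Away from a closed tubular neighbourhood $W$ of $Z$ the blow-down map $\pi:\tilde{U}\to U$ is a diffeomorphism, so on $\overline{U-W}$ there is nothing to prove and one simply sets $\tilde{\omega}=\omega=\pi^{*}\omega$. All of the genuine work is concentrated on the disc bundle $\tilde{D}=\pi^{-1}(D)$ near the exceptional divisor $\mathbb{P}(\mathcal{N})$, where $\pi$ collapses each projective fibre $\mathbb{P}(\mathcal{N}_{x})$ to a point and therefore forces $\pi^{*}\rho$ to degenerate along the collapsed directions.

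Second, I would assemble the auxiliary closed forms exactly as above. The closed $2$-form $\rho=\pi^{*}\omega+\sum_{i}d(f_{i}\alpha_{i})$ on $\mathcal{N}$ restricts to the canonical symplectic form on each fibre $\mathcal{N}_{x}$ and to $\omega_{Z}$ on the zero section; here I would arrange the tubular-neighbourhood identification $D\cong W$ so that $\rho$ pulls back to $\omega$ near $\partial W$ (Weinstein's symplectic neighbourhood theorem), which is what makes the later gluing consistent. Next, invoking McDuff's Lemma $3.2$, I take the closed $2$-form $\alpha$ on $\mathbb{P}(\mathcal{N})$ restricting to the Fubini--Study form on each projective fibre and with $q^{*}\alpha=d\beta$ exact on $L_{0}$. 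Using the cut-off function $b$ (equal to $1$ near $\mathbb{P}(\mathcal{N})$ and $0$ near $\partial\tilde{D}$) I form the closed $2$-form $\tilde{\rho}$ on $\tilde{D}$ that interpolates between the model $\pi^{*}\rho+\varepsilon q^{*}\alpha$ near the divisor and $\pi^{*}\rho$ near the boundary.

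The key step, and the main obstacle, is to show that $\tilde{\rho}$ is non-degenerate on a neighbourhood $\tilde{V}=\pi^{-1}(V)$ of the exceptional divisor for all sufficiently small $\varepsilon>0$. Along $\mathbb{P}(\mathcal{N})$ one has $\tilde{\rho}=\pi^{*}\rho+\varepsilon q^{*}\alpha$, and the two summands are non-degenerate on complementary directions: since $\pi$ collapses each $\mathbb{P}(\mathcal{N}_{x})$, the form $\pi^{*}\rho$ degenerates precisely on the fibre directions of $p:\mathbb{P}(\mathcal{N})\to Z$, while $\varepsilon q^{*}\alpha$ restricts there to $\varepsilon$ times the Fubini--Study form, non-degenerate on exactly those directions; transverse to the fibre, $\pi^{*}\rho$ is already non-degenerate. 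I would make this precise by choosing local complex coordinates adapted to the splitting determined by $\ker d\pi$ and expanding the top power $\tilde{\rho}^{n}$: its leading coefficient is a positive multiple of a volume form, and the cross terms that could spoil positivity are of higher order in $\varepsilon$ and hence harmless once $\varepsilon$ is small. The compactness of $Z$, and therefore of $\mathbb{P}(\mathcal{N})$, is what guarantees that a single $\varepsilon=\varepsilon(\rho,\alpha)$ works uniformly.

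Finally, I would glue. On the overlap $\tilde{D}-\mathbb{P}(\mathcal{N})\cong W-Z$ the cut-off $b$ vanishes near $\partial\tilde{D}$, so there $\tilde{\rho}=\pi^{*}\rho$, which matches $\omega$ under the symplectic identification chosen above; hence
\[
\tilde{\omega}:=\omega \ \text{ on } \ \overline{U-W}, \qquad \tilde{\omega}:=\tilde{\rho} \ \text{ on } \ \tilde{D},
\]
is well defined, closed (each piece is closed and they agree on the overlap), and non-degenerate (by the previous step near the divisor, and because $\tilde{\rho}=\pi^{*}\rho$ is symplectic away from it). Since $\tilde{\rho}$ differs from $\pi^{*}\omega$ only where $b\neq0$, shrinking $V$ so that this region lies inside $\tilde{V}=\pi^{-1}(V)$ yields the asserted localization $\tilde{\omega}|_{\tilde{U}-\pi^{-1}(V)}=\pi^{*}\omega$, which completes the proof.
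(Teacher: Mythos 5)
Your proposal is correct and follows essentially the same route as the paper, which itself only sketches McDuff's construction (the form $\rho$ on $\mathcal{N}$, the form $\alpha$ from McDuff's Lemma 3.2 with $q^{*}\alpha=d\beta$ on $L_{0}$, the cut-off $b$, the small $\varepsilon$, and the gluing with $\omega$ outside $W$) and then cites \cite[Proposition 3.7]{McDuff84} for the result. You additionally make explicit two points the paper leaves implicit — that the identification $D\cong W$ must be chosen (via Weinstein's symplectic neighbourhood theorem) so that $\rho$ matches $\omega$ near $\partial W$, and why $\pi^{*}\rho+\varepsilon q^{*}\alpha$ is non-degenerate for small $\varepsilon$ by compactness — which are exactly the details carried out in McDuff's original proof.
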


Now we are in a position to prove Theorem \ref{blow-up-thm}.
Assume that $(M,\omega,\theta)$ is a LCS manifold.
Let $Z\subset M$ be an IGCS submanifold,
thus the restriction of the Lee form $\theta|_{Z}$ is exact.
By a conformal rescaling of the LCS form $\omega$ we may assume that $\theta|_{Z}=0$.
In fact, if $\theta|_{Z}=df$, we denote $\omega':=\exp(-f)\omega$,
then $d\omega'=\exp(-f)(-df\wedge\omega+\theta\wedge \omega)=(\theta-df)\wedge \omega=0$.
Actually, an IGCS submanifold is a symplectic submanifold.
In the rest of this section we prove the following
\begin{thm}[Theorem \ref{blow-up-thm}]
Assume that $(M,\omega,\theta)$ is a LCS manifold and $i:Z\hookrightarrow M$ is a compact induced symplectic submanifold.
Let $\pi: \tilde{M}\to M$ be the LCS blow-up of $M$ along $Z$.
Then $\tilde{M}$ also admits a LCS structure $\tilde{\omega}$ with the Lee form $\tilde{\theta}=\pi^{*}\theta$.
\end{thm}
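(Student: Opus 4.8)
The plan is to reduce the statement to McDuff's symplectic blow-up (Proposition \ref{McDuff-prop}) by a conformal change supported near $Z$, and then to rescale the resulting symplectic form back so that its Lee form becomes $\pi^{\ast}\theta$. Away from the exceptional divisor the blow-down $\pi$ is a diffeomorphism, so there I would simply set $\tilde{\omega}:=\pi^{\ast}\omega$; since $d(\pi^{\ast}\omega)=\pi^{\ast}(\theta\wedge\omega)=\pi^{\ast}\theta\wedge\pi^{\ast}\omega$, this piece is automatically LCS with Lee form $\pi^{\ast}\theta$. All of the work is therefore concentrated in a tubular neighborhood of $Z$, where one must interpolate with the fibrewise Fubini--Study contribution coming from the exceptional divisor $\mathbb{P}(\mathcal{N})$.

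First I would produce a genuine symplectic model near $Z$. Let $W$ be the closed tubular neighborhood of $Z$ used in the construction of $\tilde{M}$. Because the inclusion $Z\hookrightarrow W$ is a homotopy equivalence, restriction induces an isomorphism $H^{1}(W;\mathbb{R})\cong H^{1}(Z;\mathbb{R})$, and the IGCS hypothesis $i^{\ast}[\theta]=0$ then forces $[\theta|_{W}]=0$. Hence $\theta|_{W}=dg$ for some smooth $g\colon W\to\mathbb{R}$. Setting $\omega_{0}:=e^{-g}\omega$ on $W$ I compute
$$
d\omega_{0}=e^{-g}\bigl(-dg\wedge\omega+d\omega\bigr)=e^{-g}(\theta-dg)\wedge\omega=0,
$$
so $(\mathrm{int}\,W,\omega_{0})$ is a symplectic manifold and $Z$ is a compact symplectic submanifold (its tangent bundle is $\omega_{0}$-symplectic since $i^{\ast}\omega$ is non-degenerate and $e^{-g}>0$). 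Applying Proposition \ref{McDuff-prop} to $(\mathrm{int}\,W,\omega_{0})$ and $Z$ yields a symplectic form $\tilde{\omega}_{0}$ on the blow-up $\tilde{D}$ of $W$ along $Z$ satisfying the normalization $\tilde{\omega}_{0}=\pi^{\ast}\omega_{0}$ on $\tilde{D}-\pi^{-1}(V)$ for some neighborhood $V$ of $Z$ sitting well inside $W$.

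Next I would rescale and glue. On $\tilde{D}$ I define $\tilde{\omega}:=e^{\pi^{\ast}g}\tilde{\omega}_{0}$; since $\tilde{\omega}_{0}$ is closed and $\pi^{\ast}dg=\pi^{\ast}\theta$, a one-line computation gives
$$
d\tilde{\omega}=e^{\pi^{\ast}g}\,\pi^{\ast}(dg)\wedge\tilde{\omega}_{0}=\pi^{\ast}\theta\wedge\tilde{\omega},
$$
so this piece is LCS with Lee form $\pi^{\ast}\theta$, and it is non-degenerate because $\tilde{\omega}_{0}$ is and $e^{\pi^{\ast}g}>0$. On $\overline{M-W}$ I keep $\tilde{\omega}:=\pi^{\ast}\omega$ as above. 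The two definitions must be checked to agree on the overlap $\tilde{D}-\mathbb{P}(\mathcal{N})\cong M-Z$ near $\partial\tilde{D}\cong\partial W$: there $\pi$ is a diffeomorphism and McDuff's normalization gives $\tilde{\omega}_{0}=\pi^{\ast}\omega_{0}=e^{-\pi^{\ast}g}\pi^{\ast}\omega$, whence $e^{\pi^{\ast}g}\tilde{\omega}_{0}=\pi^{\ast}\omega$; the exponential factors cancel exactly and the two forms coincide on a collar. Thus $\tilde{\omega}$ is a well-defined, global, non-degenerate $2$-form on $\tilde{M}$ with $d\tilde{\omega}=\pi^{\ast}\theta\wedge\tilde{\omega}$, i.e. a LCS structure with Lee form $\tilde{\theta}=\pi^{\ast}\theta$.

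The main obstacle is exactly the interplay between the two conformal factors at the gluing locus: the construction only closes up because the primitive $g$ used in the third step is the \emph{same} function whose pullback both rescales $\tilde{\omega}_{0}$ and, through McDuff's normalization $\tilde{\omega}_{0}=\pi^{\ast}\omega_{0}$, undoes that rescaling near the boundary. The two points that genuinely require care are the cohomological step $i^{\ast}[\theta]=0\Rightarrow[\theta|_{W}]=0$, which produces a \emph{global} primitive $g$ on $W$, and the verification that Proposition \ref{McDuff-prop} can be applied on $\mathrm{int}\,W$ with its normalization region $V$ deep enough inside $W$ that the collar overlap is nonempty. Once these are in place, the Lee-form identity $d\tilde{\omega}=\pi^{\ast}\theta\wedge\tilde{\omega}$ and the non-degeneracy follow immediately from the displayed computations.
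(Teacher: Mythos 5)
Your proposal is correct and follows essentially the same route as the paper: use $i^{\ast}[\theta]=0$ plus the homotopy equivalence between $Z$ and its tubular neighborhood to write $\theta=dg$ near $Z$, pass to the symplectic form $e^{-g}\omega$, invoke McDuff's result (Proposition \ref{McDuff-prop}) on that neighborhood, and glue with $\pi^{\ast}\omega$ outside via McDuff's normalization. The only difference is your conformal bookkeeping, which is in fact slightly more careful than the paper's: the paper performs the conformal change once and never rescales back, so as written its glued form has Lee form $\pi^{\ast}$ of the \emph{modified} (conformally equivalent) Lee form, whereas your explicit rescaling by $e^{\pi^{\ast}g}$ on $\tilde{D}$, cancelling exactly against the normalization $\tilde{\omega}_{0}=\pi^{\ast}\omega_{0}$ on the collar, yields a Lee form literally equal to $\pi^{\ast}\theta$ as the theorem asserts.
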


\begin{proof}
By assumpation, the pull back $\theta|_{Z}:=i^{\ast}\theta$ is zero.
Let $U$ be a neighborhood of $Z$ such that the inclusion $i:Z\hookrightarrow U$ induces an isomorphism on the first de Rham cohomology groups
$$\xymatrix@C=0.5cm{
   & i^{*}:H^{1}_{dR}(U) \ar[r]^{\,\,\,\cong} & H^{1}_{dR}(Z). }
$$
Via a conformal change of the LCS form $\omega$, we may assume that $\theta|_{U}=0$.
It follows that $\omega|_U$ is a symplectic form on $U$.
In particular, since $\theta|_{U}=0$  the intersection of the support of $\theta$ with $U$ is empty.
Choose a sufficiently small closed tubular neighborhood $W$ of $Z$ in $M$ such that $W\subset U$.
Let $\pi:\tilde{M}\to M$ be the LCS blow-up of $M$ along $Z$ with respect to the tubular neighborhood $W$
and a subdisc bundle $D$ of $\mathcal{N}$ that is diffeomorphic to $W$.
Let $\tilde{U}:=\pi^{-1}(U)$ then $\pi:\tilde{U}\longrightarrow U$ is the symplectic blow-up of $U$ along $Z$ with respect to the neighborhoods $W$ and $D$, i.e.
$$
\tilde{U}:=\overline{U-W}\bigcup_{\partial \tilde{D}} \tilde{D}.
$$
From Proposition \ref{McDuff-prop}, there exists a symplectic form $\tilde{\omega}_{U}$ on $\tilde{U}$,
which equals to $\pi^{\ast}\omega$ outside of $\pi^{-1}(V)$ for a neighborhood $V$ of $Z$ in $U$.
Observe that $\pi^{-1}(Z)=\mathbb{P}(\mathcal{N})$ and $\pi$ gives rise to an identification between $\tilde{M}-\mathbb{P}(\mathcal{N})$ and $M-Z$;
therefore, we obtain a non-degenerate 2-form $\tilde{\omega}$ on $\tilde{M}$ given by
$$
\tilde{\omega}
:=\left\{
    \begin{array}{ll}
      \pi^{*}\omega & \textmd{on}\; \tilde{M}-\tilde{U} \\
       \tilde{\omega}_{U} & \textmd{on}\; \tilde{U}.
    \end{array}
  \right.
$$
It remains to verify that $\tilde{\omega}$ is a LCS form with Lee form $\tilde{\theta}=\pi^{*}\theta$.
It is straightforward since we have $\tilde{\theta}\mid_{\tilde{U}}=0$ and $\tilde{\omega}=\pi^{*}\omega$ outside of $\tilde{U}$.
This completes the proof.
\end{proof}

Under the LCS blow-ups we also have a blow-up formula of the Lichnerowicz cohomology as following.
\begin{cor}(\cite[Theorem 1.1]{YZ15})
Let $(M,\omega,\theta)$ be a compact LCS manifold of dimension $2n$.
Assume that $Z\subset M$ is a compact IGCS submanifold of codimension $2r$.
Then we have
$$H^{k}_{\theta}(M)\oplus\biggl(\bigoplus^{r-2}_{i=0} H^{k-2i-2}_{dR}(Z)\biggr)\cong H^{k}_{\tilde{\theta}}(\tilde{M}),$$
where $\pi:\tilde{M}\longrightarrow M$ is the LCS blow-up of $M$ along $Z$.
\end{cor}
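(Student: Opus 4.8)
The plan is to trivialize the twisting near the exceptional divisor and then run the classical blow-up computation, keeping only the global summand twisted. First I would invoke the reduction already used in the proof of Theorem~\ref{blow-up-thm}: since $Z$ is IGCS, after a conformal rescaling of $\omega$ we may assume that $\theta$ vanishes identically on a tubular neighbourhood $U$ of $Z$, so that $\tilde\theta=\pi^{*}\theta$ vanishes on $\tilde U:=\pi^{-1}(U)$, a tubular neighbourhood of the exceptional divisor $E:=\mathbb{P}(\mathcal{N})$. On any open set contained in $U$ (resp. $\tilde U$) the Lichnerowicz differential $d_{\theta}$ (resp. $d_{\tilde\theta}$) coincides with the ordinary $d$, hence the Lichnerowicz cohomology there agrees with the de Rham cohomology. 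Moreover $d_{\theta}=d-\theta\wedge(-)$ is the differential of the de Rham complex twisted by the flat real line bundle $\mathcal{L}_{\theta}$ determined by the closed form $\theta$, so $H^{*}_{\theta}(-)=H^{*}(-;\mathcal{L}_{\theta})$ is a cohomology theory obeying Mayer--Vietoris, and the diffeomorphism $\pi\colon \tilde M\setminus E\xrightarrow{\cong} M\setminus Z$ intertwines $d_{\tilde\theta}$ with $d_{\theta}$.

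Next I would compare the two Mayer--Vietoris sequences attached to the covers $M=(M\setminus Z)\cup U$ and $\tilde M=(\tilde M\setminus E)\cup\tilde U$, whose common overlap is $Y:=U\setminus Z\cong\tilde U\setminus E$, a region on which $\theta=0$. Using the first step these read
$$\cdots\to H^{k}_{\theta}(M)\to H^{k}_{\theta}(M\setminus Z)\oplus H^{k}_{dR}(Z)\to H^{k}_{dR}(Y)\to\cdots$$
and
$$\cdots\to H^{k}_{\tilde\theta}(\tilde M)\to H^{k}_{\theta}(M\setminus Z)\oplus H^{k}_{dR}(E)\to H^{k}_{dR}(Y)\to\cdots,$$
where I have used $H^{*}_{\theta}(U)=H^{*}_{dR}(Z)$, $H^{*}_{\tilde\theta}(\tilde U)=H^{*}_{dR}(E)$ and the matchings $H^{*}_{\tilde\theta}(\tilde M\setminus E)\cong H^{*}_{\theta}(M\setminus Z)$ and $H^{*}_{\tilde\theta}(\tilde U\setminus E)\cong H^{*}_{dR}(Y)$. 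The blow-down $\pi$ together with the bundle projection $p\colon E\to Z$ yields a commutative ladder between these sequences whose verticals are the identity on the summands $H^{*}_{\theta}(M\setminus Z)$ and $H^{*}_{dR}(Y)$ and equal $p^{*}$ on $H^{*}_{dR}(Z)\to H^{*}_{dR}(E)$; commutativity on the overlap square holds because the restriction $H^{*}_{dR}(Z)\to H^{*}_{dR}(Y)$ and $p^{*}$ followed by restriction both factor as pullback along $Y\simeq S(\mathcal{N})\to Z$.

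Since $E=\mathbb{P}(\mathcal{N})\to Z$ is a $\mathbb{CP}^{r-1}$-bundle, the Leray--Hirsch theorem applied to the powers $1,h,\dots,h^{r-1}$ of the first Chern class $h\in H^{2}_{dR}(E)$ of the tautological bundle $L$ gives a splitting of $H^{*}(Z)$-modules
$$H^{k}_{dR}(E)\cong\bigoplus_{i=0}^{r-1}H^{k-2i}_{dR}(Z)=H^{k}_{dR}(Z)\oplus\bigoplus_{i=0}^{r-2}H^{k-2i-2}_{dR}(Z),$$
under which $p^{*}$ is the inclusion of the $i=0$ summand; in particular $p^{*}$ is split injective with cokernel $\bigoplus_{i=0}^{r-2}H^{k-2i-2}_{dR}(Z)$. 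A Barratt--Whitehead comparison of the two ladders---two of the three verticals being isomorphisms and the third split injective---then produces a short exact sequence
$$0\to H^{k}_{\theta}(M)\xrightarrow{\pi^{*}}H^{k}_{\tilde\theta}(\tilde M)\to\bigoplus_{i=0}^{r-2}H^{k-2i-2}_{dR}(Z)\to 0.$$
Finally I would split this sequence using the integration over the fibre $\pi_{*}$ of the blow-down: because $\tilde\theta=\pi^{*}\theta$, the map $\pi_{*}$ descends to the Lichnerowicz complexes and satisfies $\pi_{*}\pi^{*}=\mathrm{id}$, yielding the claimed decomposition $H^{k}_{\tilde\theta}(\tilde M)\cong H^{k}_{\theta}(M)\oplus\bigoplus_{i=0}^{r-2}H^{k-2i-2}_{dR}(Z)$.

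I expect the main obstacle to lie in the last two points in the twisted setting: verifying that the Barratt--Whitehead comparison is legitimate---which requires the verticals over $Y$ and over $M\setminus Z$ to be honest isomorphisms of the matched local systems, precisely what the vanishing $\theta|_{U}=0$ guarantees---and constructing a twisted fibre integration $\pi_{*}$ compatible with $d_{\tilde\theta}$ satisfying $\pi_{*}\pi^{*}=\mathrm{id}$, so that the sequence splits canonically rather than merely abstractly. Everything away from these verifications becomes formal once the reduction $\theta|_{U}=0$ has trivialized the twisting in a neighbourhood of $E$, since then all cohomology groups supported near the exceptional divisor are ordinary de Rham groups to which the classical projective-bundle and Gysin machinery applies verbatim.
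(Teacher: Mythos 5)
The paper itself offers no proof of this corollary---it is quoted directly from \cite[Theorem 1.1]{YZ15}---so your proposal must stand on its own; its Mayer--Vietoris strategy is indeed the natural one and is close in spirit to the cited source. However, there is a genuine gap at the decisive step. A Barratt--Whitehead comparison in which two of the verticals are isomorphisms and the third ($p^{*}$) is split injective does \emph{not} formally produce the short exact sequence
$$0\to H^{k}_{\theta}(M)\xrightarrow{\pi^{*}}H^{k}_{\tilde{\theta}}(\tilde{M})\to\operatorname{coker}(p^{*})\to 0.$$
What the comparison actually yields, after cancelling the common summand $H^{k}_{\theta}(M\setminus Z)$, is a long exact sequence
$$\cdots\to H^{k}_{\theta}(M)\to H^{k}_{\tilde{\theta}}(\tilde{M})\oplus H^{k}_{dR}(Z)\xrightarrow{\phi_{k}}H^{k}_{dR}(E)\xrightarrow{\psi_{k}}H^{k+1}_{\theta}(M)\to\cdots,$$
with $\phi_{k}(u,z)=u|_{\tilde{U}}-p^{*}z$, and the blow-up formula requires $\psi_{k}=0$, i.e.\ surjectivity of $\phi_{k}$. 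Split injectivity of $p^{*}$ only accounts for the $i=0$ Leray--Hirsch summand of $H^{k}_{dR}(E)$; one must still show that the summands $h^{i}\cdot p^{*}H^{k-2i}_{dR}(Z)$ with $i\geq 1$ lie in the image of the restriction $H^{k}_{\tilde{\theta}}(\tilde{M})\to H^{k}_{dR}(E)$, and this is geometric input, not diagram algebra. Indeed, the purely formal claim is false: the ladder whose first row is $0\to 0\to 0\to\mathbb{R}\xrightarrow{\cong}\mathbb{R}\to 0$ and whose second row is $0\to 0\to\mathbb{R}\xrightarrow{\cong}\mathbb{R}\to 0\to 0$, with the identity on the $C$-terms and the (vacuously split injective) zero map $0\to\mathbb{R}$ on the $B$-terms, satisfies all your hypotheses yet the vertical $\mathbb{R}\to 0$ on the $A$-terms is not injective.

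The gap is repairable with ingredients you already have, in one of two ways. Either (i) prove surjectivity of $\phi_{k}$ directly: McDuff's construction provides a closed $2$-form supported in $\tilde{D}\subset\tilde{U}$ (the form glued from $\varepsilon q^{*}\alpha$ and $\varepsilon d(b\beta)$, which vanishes near $\partial\tilde{D}$) restricting fibrewise to the Fubini--Study class; since $\tilde{\theta}$ vanishes on $\tilde{U}$, its extension by zero to $\tilde{M}$ is automatically $d_{\tilde{\theta}}$-closed, and wedging its powers with pullbacks of closed forms from $Z$ (via $\tilde{U}\to E\to Z$) produces $d_{\tilde{\theta}}$-classes on $\tilde{M}$ hitting the missing Leray--Hirsch generators. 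Or (ii) reorder your own argument: first establish $\pi_{*}\pi^{*}=\mathrm{id}$ on the twisted complexes (legitimate, since $\pi$ is a proper degree-one map and $\tilde{\theta}=\pi^{*}\theta$, so the pushforward of currents respects the pulled-back local system); this makes $\pi^{*}$, and hence the first map of the long exact sequence above, injective in every degree, which by exactness forces $\psi_{k}=0$ and gives
$$0\to H^{k}_{\theta}(M)\to H^{k}_{\tilde{\theta}}(\tilde{M})\oplus H^{k}_{dR}(Z)\to H^{k}_{dR}(E)\to 0;$$
the stated formula then follows from Leray--Hirsch together with cancellation of the finite-dimensional summand $H^{k}_{dR}(Z)$. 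As written, your proposal uses $\pi_{*}$ only at the very end, to split a short exact sequence that has not yet been established---so the splitting map is invoked after the point where it is actually needed.
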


\section{Concluding remark}

In \cite[Corollary 2.11]{OVV13}, using the current theory on complex manifolds, Ornea-Verbitsky-Vuletescu proved that if the blow-up of a compact LCK manifold along a  compact submanifold admits a LCK structure then the submanifold must be an IGCK submanifold.
Similarly, for LCS manifolds we have the following problem:
\emph{
If the blow-up of a compact LCS manifold along a compact ILCS submanifold admits a LCS structure,
is it true that this submanifold is IGCS?
}
It is noteworthy that for LCS manifolds we can not use the current theory since the almost complex structures on LCS manifolds are not integrable necessarily.

The existence of K\"{a}hler metrics on a compact complex manifold implies many topological properties.
These properties enable people to construct many examples of non-K\"{a}hler and symplectic manifolds.
In the case of LCK geometry, it is not easy to exclude whether a manifold admits a LCK metric for the lack of topological obstructions.
Comparing with symplectic/K\"{a}hler geometries, Ornea-Verbitsky \cite{OV11} proposed an open problem:
\emph{
Construct a compact LCS manifold which admits no LCK metrics.
}

In 2011, Bande-Kotschick \cite{BK11} constructed a 4-dimensional product manifold $M\times S^{1}$ which is LCS and not LCK.
Later, in 2014 Bazzoni-Marrero \cite{BM14} constructed a symplectic, and hence LCS, nilmanifold $N$ which is not the product of a compact 3-manifold and a circle (see also \cite[Corollary 3.6]{BM14a}).
In particular, they proved that $N$ admits no complex structures.
This implies that $N$ is LCS and not LCK.
In fact, using the LCS blow-up technique at points of LCS manifolds having no LCK structures,
we can obtain more LCS manifolds without any LCK structures (cf. \cite[Corollary 2.4]{CY16}).
Furthermore, a natural problem is:

\emph{
How to construct examples of LCS manifolds which are not symplectic and LCK?
Moreover, can we construct higher dimensional LCS manifolds without any complex structures?
}


\end{document}